\newtheorem{theorem}{Theorem}
\newtheorem{lemma}{Lemma}
\newtheorem{Prop}{Proposition}
\newtheorem{Cor}{Corollary}
\title{On the Color Discrepancy of Spanning Trees in Random and Randomly Perturbed Graphs}
\author{Wenchong Chen}
\address[Chen]{Nankai University, Weijin Road 94, Nankai District, Tianjin, 300192, P. R. China}
\email{2212161@mail.nankai.edu.cn}
\author{Xiao-Chuan Liu}
\address[Liu]{Departamento de Matemática,
 Universidade Federal de Pernambuco,
	Avenida Jornalista Aníbal Fernandes - Cidade Universitária, Recife, Brasil}
\email{xiaochuan.liu@ufpe.br}
\author{Xu Yang}
\address[Yang]{Instituto de Computação, Universidade Federal de Alagoas,
	Av. Lourival Melo Mota, S/N, Maceió, Brasil} 
\email{yang@ic.ufal.br}
\begin{document}

\maketitle

\begin{abstract}
In this work, we study the color discrepancy of spanning trees in random graphs. We show that for the Erd\H{o}s-Rényi random graph $G(n,p)$ with $p$ above the connectivity threshold, the following holds with high probability: in every 2-edge-coloring of the graph, there exists a spanning tree with a linear number of leaves such that one color class contains more than $\frac{1 + \varepsilon}{2}n $ of the tree's edges. Here, $\varepsilon>0$ is a small absolute constant independent of $p$. 

We also extend this line of research to randomly perturbed dense graphs, showing that adding a few random edges to a dense graph typically creates a spanning tree with a large color discrepancy under any 2-edge-coloring. 
\end{abstract}

\section{Introduction}

The \emph{color discrepancy} problems ask to what extent one color can dominate another in an edge--colored graph. 
Among the various formulations, the hypergraph framework provides a natural general setting. 
Formally, given a hypergraph $\mathcal{H} = (X, \mathcal{E})$, 
consider a (not necessarily proper) two--coloring 
$f \colon X \to \{+1,-1\}$ 
of its vertex set. 
The \emph{discrepancy} of $f$ on $\mathcal{H}$ is 
$\mathcal{D}(\mathcal{H}, f)
   = \max_{A \in \mathcal{E}}
     \bigl| \sum_{x \in A} f(x) \bigr|$. 
The \emph{discrepancy} of the hypergraph $\mathcal{H}$ itself is then
\begin{equation}
\mathcal{D}(\mathcal{H})
   = \min_{f}
     \mathcal{D}(\mathcal{H}, f).
\end{equation}
For historical studies on this topic, see~\cite{beck1987irregularities, alexander2017geometric}.

For a given graph \(G\), we define an associated hypergraph \(\mathcal H = (X,\mathcal E)\), 
where the vertex set \(X = E(G)\) consists of the edges of \(G\), 
and each hyperedge in \(\mathcal E\) corresponds to a prescribed global substructure of \(G\). 
This formulation captures the inherent concentration of colours within large or 
structurally constrained subgraphs of \(G\). 
Typical examples of such global substructures include Hamilton cycles and spanning trees.
For example, the recent work by Balogh, Csaba, Jing, and Pluhár~\cite{balogh2020discrepancies} initiated the study of discrepancy problems in arbitrary graphs, focusing on the discrepancy of spanning trees and Hamiltonian cycles. This line of research was later extended by Gishboliner, Krivelevich, and Michaeli~\cite{gishboliner2022discrepancies} to $r$-edge colorings, with further developments in~\cite{hollom2024discrepancies}.

In this paper we study the random graph $G \sim G(n,p)$, and 
consider the hypergraph $\mathcal{H} = (X,\mathcal{E})$ defined by 
$X = E(G)$ and 
$\mathcal{E} = \mathcal{T}_G$, 
where $\mathcal{T}_G$ denotes the family of all spanning trees of $G$ having linearly many leaves. 
The associated color discrepancy problem captures the extent to which a two--coloring of the edges of $G$ 
can remain balanced across such spanning trees. 
Let \( n \) be a positive integer and \( p \in [0, 1] \) a real number. 
We denote by \( G(n, p) \) the \emph{binomial random graph}: the probability space consisting of all simple labeled graphs on \( n \) vertices, where each pair of distinct vertices is independently connected by an edge with probability \( p \). An event \( A \) in this probability space is said to occur \emph{with high probability} ({\bf whp}) if \( \mathbb{P}(A) \to 1 \) as \( n \to \infty \). Throughout the paper, we assume that 
$n$ is sufficiently large. In the context of random graphs, Gishboliner, Krivelevich, and Michaeli~\cite{gishboliner2022color} proved the following 
theorem:
\begin{theorem}[Theorem 1.5 in~\cite{gishboliner2022color}]
    Let $r\geq 2$ be an integer and let $\varepsilon>0$. Then there exist $C$ and $K$ such that if $p\geq C/n$, the random graph $G\sim G(n,p)$ is {\bf whp} such that in any $r$-coloring of its edges there exists a path of length at least $(\frac{2}{r+1}-o(1))n$ in which all but at most $K$ of the edges are of the same color. 
\end{theorem}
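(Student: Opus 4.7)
The plan is to apply depth-first search (DFS) with a color-priority rule to $G(n,p)$ and to analyze the stack path it produces. I would first condition on the standard whp properties of $G\sim G(n,p)$ for $p\geq C/n$ with $C=C(r,\varepsilon)$ sufficiently large: the existence of a giant component of order $(1-o_C(1))n$, pseudo-random edge counts between linear-sized subsets, and concentration of vertex degrees around $np$.

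Given an arbitrary $r$-coloring $c\colon E(G)\to[r]$, I would select by pigeonhole a color $c^*$ with $|c^{-1}(c^*)|\geq |E(G)|/r$, and run a DFS on $G$ with the following rule: from the current top $v$ of the stack, first attempt to descend via an unexplored color-$c^*$ neighbor of $v$; only when none remains, try neighbors along edges of other colors, recording each such event as a \emph{color-switch}. At any time $t$ of the algorithm, the stack $S_t$ is a path in $G$. The goal is to show that at some time $t^*$ we have $|S_{t^*}|\geq (\tfrac{2}{r+1}-o(1))n$, and that the path $S_{t^*}$ carries at most $K=K(C,r,\varepsilon)$ edges not of color $c^*$.

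The bound on color-switches should follow from the randomness of $G$: a switch can occur only when the current top has no unexplored color-$c^*$ neighbor, which for large unexplored set $U$ is a rare event since color-$c^*$ degrees are typically $\Omega(np/r)$. By revealing edges in DFS order and applying a union bound over vertices and over the moments at which they become the top, one can aim to guarantee that the total number of color-switches is $O(1)$. The length bound is the more delicate part. Standard DFS analysis on $G(n,p)$ produces a stack of linear length inside the giant component, but to pin down the sharp factor $\tfrac{2}{r+1}$ I would expect to combine (a) the stack-growth lemma of Ben-Shimon--Krivelevich--Sudakov, ensuring that the stack attains linear size before the priority-color neighbors are exhausted, and (b) an extremal consideration in the spirit of Gerencsér--Gyárfás partitioning $V$ into $r+1$ roughly equal blocks to bound how the DFS trajectory interacts with the color classes.

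The main obstacle, I expect, is obtaining the explicit constant $\tfrac{2}{r+1}$ rather than some weaker linear lower bound. A crude DFS-based analysis typically yields a fraction of the form $(1-o_C(1))/r$ or $1/(r+1)$, and matching the extremal ratio $2/(r+1)$ likely requires a multi-phase DFS or an auxiliary cover-type lemma that extracts a long almost-monochromatic subpath from the overall DFS trajectory, in which the role of the $r+1$ parts of the extremal construction is mirrored by carefully chosen thresholds on the DFS depth. A secondary difficulty is to keep the number $K$ of permitted color-switches a true constant independent of $n$ and $p$, which will likely hinge on establishing that the majority-color subgraph $G^*:=c^{-1}(c^*)$ inherits enough expansion from $G$ to force the DFS to remain within $G^*$ for almost its entire run.
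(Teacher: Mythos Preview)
The paper does not prove this statement at all: it is quoted verbatim as Theorem~1.5 of Gishboliner--Krivelevich--Michaeli and used only as background motivation for the authors' own results on spanning trees. There is therefore no ``paper's own proof'' to compare your proposal against; the argument lives entirely in the cited reference.

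As for the proposal itself, your DFS-with-color-priority outline is plausible as a way to get \emph{some} linear lower bound on a nearly monochromatic path, and you correctly flag the real difficulty: extracting the sharp constant $\tfrac{2}{r+1}$. Your suggested fix---pigeonholing to a majority color and hoping expansion of the majority-color subgraph keeps the DFS inside it---cannot give $\tfrac{2}{r+1}$ on its own, since the majority color need only have a $1/r$ fraction of the edges, and the Gerencs\'er--Gy\'arf\'as extremal picture is a \emph{lower-bound} construction rather than an argument you can invoke directly. To reach $\tfrac{2}{r+1}$ one typically needs an additional combinatorial ingredient (a cover/partition lemma for $r$-edge-colorings guaranteeing a large monochromatic connected piece) layered on top of the random-graph DFS machinery, not just a single color-prioritized DFS run. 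So as written, the plan would stall at a weaker constant; the missing idea is the structural coloring lemma that converts ``linear'' into ``$\tfrac{2}{r+1}$''.
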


This theorem implies that, {\bf whp}, random graphs contain large tree-like substructures with significant color imbalance. However, the structures guaranteed by this result—long paths or trees of large diameter—do not address the case of trees with many leaves and small diameter.

In this work, we strengthen this perspective by focusing on spanning trees with additional structural constraints. Specifically, we focus on spanning trees with a large number of leaves, which inherently tend to have small diameters. Spanning trees with many leaves often correspond to topologies that enable efficient broadcasting or routing with minimal delay, a desirable property in distributed systems and communication networks. Such trees are of particular interest due to their relevance in practical scenarios such as network design, communication protocols, and models of real-world systems like small-world networks~\cite{eppstein1996spanning, ravi1996spanning}. Our results show that, even under the structural constraint of having linearly many leaves, random graphs still contain spanning trees with high color discrepancy, thereby complementing and extending the previous theory.

Let \( G \) be a graph with \( n \) vertices, and let  \( c(G) \) denote the number of its connected components.  
Define \( \mathcal{F} \) to be the collection of all spanning forests of \( G \) with exactly \( c(G) \) components, then let $\mathcal F_\alpha$ subcollection of forests that contain at least $\alpha n$ leaves:
\begin{align}\label{spanning_forests}
\mathcal F & = \mathcal{F}_G 
= \left\{ F : F \text{ is a spanning forest  of } G \text{ and } c(F) = c(G) \right\}.\\
\mathcal F_\alpha &=\{F\in \mathcal F : F \text{ has at least } \alpha n \text{ leaves}\}.
\end{align}
 We first obtain a high discrepancy result for spanning forests in random graphs $G\sim G(n,p)$, where $p=\frac{C}{n}$. 
\begin{theorem}\label{disc_forest}
Let \( \alpha \in (0,\frac 15) \) be fixed.  There exist constants \( \varepsilon > 0 \) and \( C > 0 \) such that the following holds. Let \( G \sim G(n, p) \) with \( p \geq  \frac{C}{n} \). With high probability, $G$ satisfies the following property:  
for any edge coloring \( \chi : E(G) \to \{+1, -1\} \), there exists a spanning forest \( F \in \mathcal{F}_\alpha \) such that
\begin{equation}
\left| \sum_{e \in E(F)} \chi(e) \right| \geq \varepsilon n.
\end{equation}
\end{theorem}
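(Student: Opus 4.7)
The plan is to lower-bound the maximum spanning-forest discrepancy via a matroid argument, show that this lower bound is $\Omega(n)$ for random graphs by proving a "color-giant" property, and then extract inside the resulting forest a spanning tree of a large minimum-degree-3 subgraph with many leaves.

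For any $2$-coloring $\chi$ with color classes $E_+,E_-$, a Kruskal-style greedy that first takes a maximum spanning forest of $G_+:=(V,E_+)$ and then extends to a spanning forest $F^+$ of $G$ using $E_-$-edges yields
\[
|F^+\cap E_+|=n-c(G_+),\qquad |F^+\cap E_-|=c(G_+)-c(G),
\]
so $F^+$ has discrepancy $|n+c(G)-2c(G_+)|$; symmetrically $F^-$ has discrepancy $|n+c(G)-2c(G_-)|$. Hence the maximum spanning-forest discrepancy is at least $n+c(G)-2\min(c(G_+),c(G_-))$. By submodularity of the graphic matroid rank, $c(G_+)+c(G_-)\le n+c(G)$ always, so the only obstruction to a large discrepancy is a "balanced" coloring for which $c(G_+)$ and $c(G_-)$ are simultaneously close to $(n+c(G))/2$.

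The main random-graph step is to prove the following \emph{color-giant} property: for $C$ large enough, whp $G\sim G(n,C/n)$ is such that every $2$-coloring $\chi$ admits a color $\sigma\in\{+,-\}$ whose giant component has at least $(1-\delta)n$ vertices, with $\delta=\delta(C)\to 0$ as $C\to\infty$; in particular $c(G_\sigma)\le (n+c(G))/2-\varepsilon n$. Heuristically the majority class has $\ge |E(G)|/2\approx Cn/4$ edges, and the edge-expansion of $G(n,C/n)$ (whp every linear-sized vertex set has linearly many boundary edges in $G$) forces these edges to support a single linear-sized connected component. Making this heuristic rigorous uniformly in $\chi$ is the main obstacle: the coloring is fully adversarial and chosen after $G$ is exposed, so a naive sprinkling argument (writing $G=G_1\cup G_2$ and using $G_1$ as a skeleton) is complicated by the fact that $\chi$ depends on both rounds; I would combine a Chernoff-type degree-concentration bound for $G(n,C/n)$ with a case analysis parametrized by the set of vertices isolated in the minority class, and fall back on a covering/union-bound over a polynomial-sized set of canonical colorings where necessary.

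To enforce the leaf condition, I take the color $\sigma$ provided by the color-giant property and let $G_\sigma^1$ be its giant on $(1-\delta)n$ vertices. Since whp the $3$-core of $G$ has at least $(1-o_C(1))n$ vertices, its intersection with $G_\sigma^1$ is of size $(1-O(\delta))n$, and a standard pruning of the few low-degree residual vertices produces a connected subgraph $H\subseteq G_\sigma^1$ of minimum degree at least $3$ on $(1-O(\delta))n$ vertices; all edges of $H$ carry color $\sigma$. The Payan--Tchuente--Xuong bound then gives a spanning tree $T_H$ of $H$ with at least $n/4-O(\delta n)$ leaves. I extend $T_H$ to a spanning forest $F\in \mathcal F$ of $G$ by attaching each of the remaining $O(\delta n)$ vertices along a single edge (of either color), which destroys at most $O(\delta n)$ leaves and contributes at most $O(\delta n)$ opposite-color edges to the discrepancy count. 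For $\alpha<\tfrac15$ and $\delta$ sufficiently small this produces a forest $F\in\mathcal F_\alpha$ with $\bigl|\sum_{e\in F}\chi(e)\bigr|\ge (1-O(\delta))n\ge \varepsilon n$, as required.
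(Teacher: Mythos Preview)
Your approach differs substantially from the paper's. The paper first builds, with high probability and \emph{before} seeing the colouring, a single tree $T$ in the giant with $|L(T)\cup L(I(T))|\ge(\tfrac59-\delta)n$; then, for any given $\chi$, it locates $\delta n$ leaf-edges of each colour, finds a matching of size $\Omega(\delta n)$ between the corresponding leaf-vertices using the edge-expansion of $G(n,C/n)$, and swaps edges to gain $\Omega(\delta n)$ in discrepancy. No colour-dependent structural claim about $G_\sigma$ is needed.

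Your matroid/greedy reduction to a ``colour-giant'' property is a reasonable idea, and the property is in fact true: if neither $G_+$ nor $G_-$ had a component of size $\ge(1-\delta)n$, one could find balanced cuts $S$ (no $G_+$-edges across) and $T$ (no $G_-$-edges across), and then edges of $G$ between $S\cap T$ and $S^c\cap T^c$ would have to lie in both colour classes, contradicting the edge-expansion of $G(n,C/n)$. But you leave this as a heuristic, and the route you sketch (Chernoff plus a union bound over ``canonical colourings'') is neither the natural one nor fleshed out.

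The genuine gap is the leaf step. You assert that intersecting the $3$-core of $G$ with the giant $G_\sigma^1$ and ``standard pruning'' yields a connected $H\subseteq G_\sigma^1$ on $(1-O(\delta))n$ vertices with $\delta(H)\ge 3$ and \emph{all edges of $H$ of colour $\sigma$}. This is false. Take any spanning tree $T_0$ of the giant of $G$, colour $E(T_0)$ with $+1$ and every other edge with $-1$. Then $G_+=T_0$ has a component of size $(1-\rho)n$, so $\sigma=+$ satisfies your colour-giant condition, yet $G_+$ is a tree: its $3$-core is empty and it contains no subgraph of minimum degree $3$ at all. Membership in the $3$-core of $G$ says nothing about degrees in $G_\sigma$. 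Consequently the tree $T_H$ does not exist, and the estimate $\bigl|\sum_{e\in F}\chi(e)\bigr|\ge(1-O(\delta))n$ collapses. A repair would require proving that \emph{some} colour class has simultaneously a near-spanning giant and a large monochromatic $3$-core; this may well hold for large $C$, but it is an additional argument you have not supplied and is not a ``standard pruning''.
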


Based on Theorem~\ref{disc_forest}, we obtain our first main result. 

\begin{theorem}\label{main1}
Let \( p = \frac{\log n + \omega(1)}{n} \).  
Then with high probability, the random graph \( G(n, p) \) has the following property:  
for every 2-edge-coloring of \( G \), there exists a spanning tree with a linear number of leaves, in which more than 
\( \frac{1 + \varepsilon}{2}n \) of its edges are assigned the same color, for some constant \( \varepsilon > 0 \).
\end{theorem}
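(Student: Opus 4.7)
The plan is to derive Theorem~\ref{main1} as a short corollary of Theorem~\ref{disc_forest} together with the classical connectivity threshold for $G(n,p)$. Since $p = (\log n + \omega(1))/n$ lies above that threshold, $G \sim G(n,p)$ is connected whp, so $c(G) = 1$. On this event, any spanning forest $F$ with $c(F) = c(G)$ is in fact a spanning tree, and the class $\mathcal F_\alpha$ appearing in Theorem~\ref{disc_forest} reduces to the family of spanning trees of $G$ with at least $\alpha n$ leaves.

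Next, I would fix any $\alpha \in (0, 1/5)$ and let $\varepsilon_0 > 0$, $C_0 > 0$ be the constants supplied by Theorem~\ref{disc_forest}. Since $(\log n + \omega(1))/n \geq C_0/n$ for all sufficiently large $n$, that theorem applies whp. Intersecting the two whp events, with high probability $G$ satisfies the following: for every $\chi \colon E(G) \to \{+1,-1\}$, there is a spanning tree $T$ of $G$ with at least $\alpha n$ leaves and
\[
\Bigl| \sum_{e \in E(T)} \chi(e) \Bigr| \geq \varepsilon_0 n .
\]

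To finish, I would convert this signed discrepancy into a monochromatic majority. Writing $n_+$ and $n_-$ for the numbers of $+1$ and $-1$ edges of $T$, one has $n_+ + n_- = n-1$ and $|n_+ - n_-| \geq \varepsilon_0 n$, so the larger color class contains at least $(n-1)/2 + \varepsilon_0 n/2$ edges. For $n$ sufficiently large this exceeds $(1+\varepsilon)n/2$ for any $\varepsilon < \varepsilon_0$, so the statement holds with, say, $\varepsilon = \varepsilon_0/2$, and the tree $T$ already has a linear number of leaves by construction.

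The main obstacle is entirely absorbed into Theorem~\ref{disc_forest}: the real combinatorial content is producing a spanning forest with linearly many leaves and linear color discrepancy in the sparse regime $p = \Theta(1/n)$. At the logarithmic density considered here, connectedness is automatic, forests with $c(F)=c(G)$ are automatically trees, and so Theorem~\ref{main1} requires no new structural argument beyond the forest result plus the one-line arithmetic conversion above.
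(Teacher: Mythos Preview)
Your proof is correct but takes a more direct route than the paper's. The paper does not apply Theorem~\ref{disc_forest} to $G(n,p)$ itself; instead it writes $G(n,p)$ as an independent union $G(n,p_1)\cup G(n,p_2)$ with $p_1=(\log n+\omega(1))/n$ and $p_2=C/n$, applies Theorem~\ref{disc_forest} only to the sparse part $G(n,p_2)$ to obtain a high-discrepancy spanning forest $F$ with linearly many leaves, and then uses the (whp) connectivity of $G(n,p_1)$ to add one edge per extra component of $F$, turning it into a spanning tree while changing the discrepancy by at most $\rho n$. You skip the sprinkling entirely: since Theorem~\ref{disc_forest} is \emph{stated} for all $p\ge C/n$, you invoke it directly at $p=(\log n+\omega(1))/n$, and connectedness of $G$ forces $c(G)=1$, so the guaranteed forest is already a spanning tree. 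Your argument is cleaner and loses nothing to stitching; the paper's version has the compensating advantage that it only calls Theorem~\ref{disc_forest} at $p=C/n$ with $C$ constant, which is exactly the regime in which that theorem's proof (and the degree estimates in the supporting Lemma~\ref{random_graph}) are actually written out.
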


Beyond the pure Erd\H{o}s–Rényi model, we also study the color discrepancy of randomly perturbed dense graphs, which are obtained by adding a sparse set of random edges to an arbitrary dense graph. Formally, we define our model as the union \(G_{\alpha} \cup G(n,p)\), where \(G_{\alpha}\) is a dense \(n\)-vertex graph with minimum degree \(\delta(G_{\alpha}) \ge \alpha n\), and \(G(n,p)\) is an independent random graph with \(1/n^{2} \ll p \ll 1/n\). This model is relevant because it captures realistic scenarios where a well-structured network is slightly perturbed by random connections, such as in social networks, communication systems, or distributed infrastructures. Studying the color discrepancy of all spanning trees in this setting allows us to understand whether high discrepancy persists under minimal randomness, and whether even sparse perturbations are sufficient to guarantee strong imbalance in any 2-edge-coloring. These results highlight the robustness of color discrepancy phenomena beyond purely random graphs and their relevance in practical networks with sparse randomness. We next show that even a sparse random perturbation suffices to make any dense base graph not only connected but 3-connected, ensuring the discrepancy results extend to this setting.

\begin{theorem}\label{classical-perturb}
For any $\alpha\in (0,1)$, there exists a constant $\lambda=\lambda(\alpha)$ such that the following statement holds. For any graph $G_{\alpha}$ on $n$ vertices with $\delta(G_{\alpha})\geq \alpha n$, and a probability function $p=p(n)$ with $\frac{1}{n^2}\ll p(n)\ll\frac 1n$, with high probability, 
the randomly perturbed graph $H=G_{\alpha}\cup G(n,p)$ contains a spanning tree $T$ with 
$$\left|\sum_{e\in E(T)}\chi(e) \right|\geq\lambda pn^2$$
under any coloring $\chi:E(H)\rightarrow\{+1,-1\}$.
\end{theorem}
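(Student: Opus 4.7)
The plan is to exploit the forest structure of the random edges to shift the discrepancy of a carefully chosen base spanning tree of $G_\alpha$ by an amount of order $pn^2$. I would first pin down the random part: since $p \ll 1/n$ sits strictly below the cycle-appearance threshold of $G(n,p)$, with high probability $G(n,p)$ is a forest, and Chernoff's inequality gives $|E(G(n,p))| = \Theta(pn^2)$ whp. Setting $F = E(G(n,p)) \setminus E(G_\alpha)$: in the main regime where $G_\alpha$ has density bounded away from $1$, Chernoff applied to $E(K_n) \setminus E(G_\alpha)$ yields $|F| \geq c_1 pn^2$ whp with $c_1 = c_1(\alpha) > 0$. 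The complementary case of $G_\alpha$ being essentially complete is handled separately: the minimum-degree condition combined with an existing spanning-tree discrepancy bound for dense graphs, in the spirit of Balogh--Csaba--Jing--Pluh\'ar, produces a spanning tree of $G_\alpha$ alone with discrepancy $\Omega(n) \gg pn^2$.

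Given any $2$-coloring $\chi : E(H) \to \{+1, -1\}$, partition $F = F_+ \cup F_-$ by color and assume without loss of generality that $|F_+| \geq |F|/2$. Fix any spanning tree $T_0 \subseteq G_\alpha$ and iteratively swap in the edges of $F_+$: for each $e = uv \in F_+$, add $e$ to the current tree and remove some base edge $f_e$ lying on the current $u$-$v$ path. Each matroid swap changes the discrepancy by $\chi(e) - \chi(f_e) = 1 - \chi(f_e) \in \{0, +2\}$, so the discrepancy is monotone non-decreasing along the chain and the total change lies in $[0, 2|F_+|]$. The key objective is to choose the $f_e$'s so that the total increase is $\Omega(pn^2)$, which amounts to arguing that $\Omega(|F_+|)$ swaps can be performed with a $-$-colored $f_e$.

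The principal obstacle is to control the \emph{blocked} swaps: those where the current $u$-$v$ path is entirely $+$-colored, equivalently those where $u$ and $v$ lie in the same $+$-component of the current tree's $+$-subforest. I would handle this by a case split: either $|\text{disc}(T_0)| \geq \lambda pn^2$ already---in which case we are done---or $T_0$ can be chosen so that each monochromatic subforest has only components of bounded size. The existence of such a balanced $T_0$ is argued by contradiction using the minimum-degree condition $\delta(G_\alpha) \geq \alpha n$: failure would force $G_\alpha$ to contain a linear-sized monochromatic connected subgraph, which via the dense-graph bound yields a spanning tree of discrepancy $\Omega(n)$, again closing the case. Starting from such a $T_0$ with $O(1)$-sized $+$-components, only $O(n)$ vertex pairs lie in a common $+$-component initially, so at most $O(pn)$ edges of $F_+$ are blocked at the start. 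Each good swap merges two $+$-components, which could grow them, but even in the worst case the largest $+$-component stays of size $O(pn^2) \ll n$ after all $|F_+|$ swaps, so the total number of blocked $F_+$-edges throughout the chain remains $o(pn^2)$. Hence $\Omega(pn^2)$ good swaps succeed, boosting $\text{disc}$ by $\Omega(pn^2)$; a symmetric procedure using $F_-$ handles the opposite sign, producing in either case a spanning tree $T$ with $|\text{disc}(T)| \geq \lambda pn^2$. The delicate combinatorial step is thus precisely the selection of $T_0$ with controlled monochromatic components, and that is where the denseness of $G_\alpha$ is truly invoked.
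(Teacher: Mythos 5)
Your proposal takes a genuinely different route from the paper. The paper treats the problem as a black box: it proves that $H$ is $3$-connected (Lemma~\ref{3-connected}), lower-bounds the balanced-separator parameter $s(H)$ by $\Omega(pn^2)$ using Lemma~\ref{strong-connect} and large matchings between the highly connected pieces of $G_\alpha$, and then invokes the equipartition discrepancy theorem of Gishboliner--Krivelevich--Michaeli (Theorem~\ref{equipartition}). Your approach is constructive: fix a base tree in $G_\alpha$, then perform rotations using the (whp $\Theta(pn^2)$) random edges so that each rotation is a $+1/-1$ swap increasing discrepancy. This is an appealing alternative, but the current write-up has several genuine gaps.

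First, $G_\alpha$ may be disconnected (two $\tfrac n2$-cliques have minimum degree roughly $\tfrac n2$ but are disconnected), so ``fix any spanning tree $T_0\subseteq G_\alpha$'' is not available. This is repairable --- take a spanning forest and connect its $O(1)$ components with random edges --- but you must then account for those sacrificed $F$-edges and for components of the forest in the rest of the argument.

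Second, and more seriously, the dichotomy ``either $|\mathrm{disc}(T_0)|\geq\lambda pn^2$ already, or $T_0$ can be chosen with $O(1)$-sized monochromatic components'' is asserted, not proved, and the contradiction argument you sketch is incorrect. You claim that failure forces $G_\alpha$ to contain a linear-sized monochromatic connected subgraph and that such a subgraph yields a spanning tree with discrepancy $\Omega(n)$ via a dense-graph bound in the spirit of Balogh--Csaba--Jing--Pluh\'ar. Neither implication holds: a linear-sized monochromatic connected subgraph does \emph{not} imply high tree discrepancy (take $G_\alpha$ to be a union of $\lceil 1/\alpha\rceil$ cliques of size $\approx\alpha n$, joined by $O(1)$ edges, with each clique colored so that a balanced spanning tree exists inside it; there is a $+$-connected piece of order $\alpha n$, but a spanning tree with $O(1)$ discrepancy exists). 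Moreover, the cited dense-graph discrepancy bound requires minimum degree at least $(\tfrac14+\varepsilon)n$, which is not available when $\alpha<\tfrac14$.

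Finally, the counting of ``blocked'' swaps is hand-wavy: the adversary sees $G(n,p)$ before choosing $\chi$, and the ``bad'' pairs (pairs of vertices lying in a common $+$-component of $T_0$) depend on $T_0$, which in turn depends on $\chi$ and hence on $G(n,p)$. The heuristic that a fixed set of $O(n)$ pairs contains $O(pn)$ random edges does not apply once the bad set is allowed to depend on the random graph; you would need a union bound over the relevant family of trees or a more careful two-round exposure argument. Until the dichotomy lemma and the blocking analysis are actually established, the argument does not close, whereas the paper sidesteps all of this by reducing to $3$-connectivity and $s(H)$.
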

We remark that the above estimate is sharp in the following sense.  
Let \(G^\ast\) be a graph on \(n\) vertices consisting of two connected components \(A\) and \(B\), each of order roughly \(n/2\), and with minimum degree \(\delta(G^\ast)\ge \alpha n\).  
Consider \(H = G_{\alpha}^\ast \cup G(n,p)\).  
We colour all edges inside \(A\) red, and all edges inside \(B\) as well as those between \(A\) and \(B\) blue.  
Since, with high probability, the number of edges between \(A\) and \(B\) satisfies $e (G[A,B]) = (1+o(1))\tfrac{n^2p}{4} \ll n$,
any spanning tree of \(H\) has colour discrepancy at most \(O(n^2p)\).

Our results in Theorem~\ref{disc_forest}, Theorem~\ref{main1} and Theorem~\ref{classical-perturb} naturally extend to the case of 
$r$-edge-colorings, where $r \ge 2$ is fixed. In this setting, each edge of $G$ 
is assigned one of $r$ colors, and the goal is to find a spanning forest 
$F \in \mathcal{F}_\alpha$ whose color imbalance---measured as the maximum over all 
colors of the absolute deviation from the average---is large. The arguments used 
in the two-color case can be adapted with only minor modifications.

The paper is organized as follows. Section~2 contains the proofs of Theorems~\ref{disc_forest} and~\ref{main1} for the Erd\H{o}s–Rényi random graph, while Section~3 presents the proof of Theorem~\ref{classical-perturb} on randomly perturbed dense graphs.

\section{Proof of Theorem~\ref{disc_forest} and Theorem~\ref{main1}}

In this section, we present a series of auxiliary results that form the foundation for the proof of Theorem~\ref{disc_forest} and Theorem~\ref{main1}. We begin with several lemmas and propositions, followed by their proofs.

\begin{lemma}\label{random_graph}
Fix \( \delta > 0 \). Then for \( C \) sufficiently large (depending on \( \delta \)), the random graph \( G \sim G(n, p) \) with \( p = C/n \) satisfies the following properties with high probability:

\begin{enumerate}
    \item For any disjoint sets \( V_1, V_2 \subseteq V(G) \) with \( |V_1|, |V_2| \geq \delta n \), the induced bipartite subgraph on \( V_1 \cup V_2 \) contains a matching of size at least \( \frac 18 \delta n \).
    
    \item The giant component of \( G \) has size at least \( (1 - \rho)n \), where \( \rho \ll \delta \).
\end{enumerate}
\end{lemma}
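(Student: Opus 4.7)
My plan is to prove the two properties separately, each by a standard first-moment or concentration argument for $G(n,p)$ with $p = C/n$ where $C$ will be chosen sufficiently large in terms of $\delta$.

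For part~(1), I will argue contrapositively via K\H{o}nig's theorem. If the induced bipartite graph on $V_1 \cup V_2$ does not contain a matching of size $\tfrac{1}{8}\delta n$, then it admits a vertex cover of that size, and removing the covering vertices leaves subsets $V_1' \subseteq V_1$ and $V_2' \subseteq V_2$ with $|V_i'| \ge \tfrac{7}{8}\delta n$ and no edge between them. It therefore suffices to show that, whp, every pair of disjoint vertex sets of size $\tfrac{7}{8}\delta n$ spans at least one edge. A union bound gives that the expected number of edgeless disjoint pairs of that size is at most
\begin{equation*}
\binom{n}{\tfrac{7}{8}\delta n}^{2} (1-p)^{(\tfrac{7}{8}\delta n)^{2}}
\;\le\; \Bigl(\tfrac{8e}{7\delta}\Bigr)^{\tfrac{7}{4}\delta n} \exp\!\Bigl(-\tfrac{49 C \delta^{2}}{64}\, n\Bigr),
\end{equation*}
which tends to $0$ as long as $C$ is chosen larger than a constant of order $\delta^{-1}\log(1/\delta)$.

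For part~(2), I will invoke the classical Erd\H{o}s--R\'enyi giant-component theorem: for $p = C/n$ with $C>1$ fixed, whp the largest component of $G(n,p)$ has size $(1-\rho(C)-o(1))n$, where $\rho(C)$ is the extinction probability of a Galton--Watson branching process with \text{Poisson}$(C)$ offspring, satisfying $\rho(C) = e^{-C(1-\rho(C))}$. Since $\rho(C) \to 0$ as $C \to \infty$, choosing $C$ large enough relative to $\delta$ ensures $\rho(C) \ll \delta$. (If one prefers a self-contained argument, a standard sprinkling/two-round exposure or a direct moment computation on the number of vertices in small tree-components also delivers the same conclusion.)

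There is no serious obstacle: the proof reduces to two textbook estimates, and the only real work is quantitative bookkeeping—making sure the single constant $C = C(\delta)$ chosen at the start is simultaneously large enough to (i) beat the combinatorial factor $\binom{n}{\frac{7}{8}\delta n}^{2}$ in the union bound for the matching property, and (ii) force the extinction probability of the associated branching process below the prescribed threshold $\rho \ll \delta$. Taking $C$ to be the maximum of the two thresholds produced by these two conditions yields both conclusions simultaneously.
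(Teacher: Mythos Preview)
Your proposal is correct, and for part~(2) it is essentially identical to the paper's: both simply invoke the classical giant-component theorem for $G(n,C/n)$ with the extinction-probability equation $\rho = e^{-C(1-\rho)}$ and note that $\rho(C)\to 0$ as $C\to\infty$.

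For part~(1), however, your route is genuinely different from the paper's. You argue via K\H{o}nig's theorem that failure of a $\tfrac{1}{8}\delta n$-matching forces the existence of two disjoint sets of size $\tfrac{7}{8}\delta n$ with no edge between them, and then kill all such pairs by a single first-moment calculation. The paper instead works directly with a fixed pair $V_1,V_2$: it lower-bounds the number of edges between them by $\tfrac{1}{2}C\delta^2 n$ via Chernoff, upper-bounds the maximum bipartite degree by $2C\delta$, and extracts the matching from the ratio $e(V_1,V_2)/(2\Delta)$, followed by a union bound over all pairs. Your approach is shorter and more robust --- it needs only a crude ``at least one edge'' estimate rather than simultaneous control of edge count and maximum degree --- whereas the paper's argument gives a bit more quantitative information (an explicit lower bound on $e(V_1,V_2)$) at the cost of a more delicate degree estimate. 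Either approach requires the same threshold $C \gtrsim \delta^{-1}\log(1/\delta)$, so your final choice of $C$ as the maximum of the two constraints is exactly right.
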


\begin{proof}
Both parts are standard. For (1), note that the expected number of edges in the bipartite subgraph induced on \( V_1 \cup V_2 \) is \( C\delta^2 n \). Then, by a Chernoff bound for the lower tail, with probability at least \( 1 - e^{-\Omega(C\delta^2 n)} \), the number of edges is at least at least half its expectation, that is,
\[
\mathbb{P}\left[\text{number of edges} \geq \frac{1}{2} C \delta^2 n \right] \geq 1 - e^{-\Omega(C \delta^2 n)}.
\]
On the other hand, for any vertex $v\in V_1\cup V_2$, the degree follows $d(v)\sim \text{Bin}(\delta n, p)$. Use the Chernoff upper tail bound, for any \( t > 0 \),
\[
\mathbb{P}\big[ d(v) > (1 + t) \mathbb{E}[d(v)] \big] 
\leq \exp\left( - \frac{t^2}{2 + t} \cdot \mathbb{E}[d(v)] \right).
\]
Here the expected degree is 
\[\mathbb{E}[d(v)]=\delta n\cdot \frac{C}{n}=C\delta\]
Choosing \( t = 1 \) (which corresponds to bounding the probability that \( d(v) \) exceeds \( 2 C \delta \)), we obtain
\[
\mathbb{P}\big[ d(v) > 2 C \delta \big] 
\leq \exp\left( - \frac{C \delta}{3} \right).
\]
Use a union bound over all vertices in $V_1\cup V_2$. There are $2\delta n$ vertices in $V_1\cup V_2$, so the probability that any vertex has degree exceeding $2C\delta$ is at most $2\delta n\cdot \text{exp}(-\frac{C\delta}{3})$. As long as $C\delta\gg \log n$, this becomes $\mathbb{P}[\Delta>2C\delta]\leq o(1)$. So {\bf whp}, the maximum degree in the bipartite graph is at most \( \Delta \leq 2C\delta \). Therefore, between $V_1,V_2$, there exists a matching of size at least
$\lfloor\frac{ \frac{1}{2} C \delta^2 n }{2\Delta}\rfloor  \geq \frac{ \frac{1}{2} C \delta^2 n }{4C\delta} = \frac{1}{8} \delta n.$  
The total number of ways to choose subsets $V_1$, $V_2\subseteq V(G)$ is at most 
\begin{equation}
    {n \choose \delta n}\cdot{n-\delta n \choose \delta n}\leq \big(\frac{e}{\delta}\big)^{2\delta n}.
\end{equation}
Then the probability of the existence of a pair of $V_1$ and $V_2$ such that there are less than $\frac{1}{2}C\delta^2n$ edges between them is at most 
\begin{equation}
    \big(\frac{e}{\delta}\big)^{2\delta n}\cdot e^{-\Omega(C\delta^2n)}.
\end{equation}
When \( C\delta^2 n \gg \delta n \log \tfrac{1}{\delta} \), which is implied by \( C \gg \tfrac{\log \tfrac{1}{\delta}}{\delta} \), the union bound ensures that, {\bf whp}, the desired property holds for all choices of \( V_1 \) and \( V_2 \). Note that this failure probability tends to zero as \( n \to \infty \).

For item (2), note that the giant component of $G(n,p)$ 
has size $(1-\rho)n$, where $\rho$ is the unique solution in the interval $(0,1)$ for $\rho=e^{-C(1-\rho)}$. 
\end{proof}
For any fixed tree $T$, denote by $L_k(T)$ (for $k\in \mathbb{N}$) 
the set of vertices of $T$ with degree $k$. Finally, define the inner tree $I(T)$ the subgraph induced by the non-leaves.

\begin{Prop}\label{many_leaves}
For every \( \delta > 0 \), there exists a constant \( C = C(\delta) > 0 \) such that, if \( p = \frac{C}{n} \), then with high probability
 the giant component of \( G \sim G(n, p) \) contains a spanning tree \( T \) with $|L_1(T)| \geq \big( \frac{1}{3} - \delta \big ) n.$
\end{Prop}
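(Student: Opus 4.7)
I would construct $T$ as the union of a spanning tree of a small \emph{connected dominating set} $S \subseteq V(G_1)$ in the giant component $G_1$, together with one pendant edge per vertex of $V(G_1) \setminus S$. Since every vertex of $V(G_1) \setminus S$ is then a leaf of $T$, it suffices to produce such $S$ with $|S| \leq (\tfrac{2}{3} + \tfrac{\delta}{2})n$: by Lemma~\ref{random_graph}(2) and a sufficiently large choice of $C$, one has $|V(G_1)| \geq (1 - \delta/2)n$, whence
$$|L_1(T)| \geq |V(G_1)| - |S| \geq \bigl(\tfrac{1}{3} - \delta\bigr)n.$$

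To obtain $S$, I would first sample a uniformly random subset $R \subseteq V(G_1)$ of size $\lceil(\tfrac{2}{3} - \tfrac{\delta}{2})n\rceil$, and let $B := \{v \in V(G_1) \setminus R : N(v) \cap R = \emptyset\}$ be its \emph{orphan} set. The probability that a fixed $v \notin R$ lies in $B$ is at most $(1-p)^{|R|} \leq e^{-2C/3 + o(1)}$, so a Markov bound gives $|B| \leq n e^{-C/2}$ with high probability. The set $S_0 := R \cup B$ is then a dominating set of $V(G_1)$ in $G_1$, with $|S_0| \leq (\tfrac{2}{3} - \tfrac{\delta}{3})n$ for $C$ large.

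The main obstacle is that $G_1[S_0]$ need not be connected. To handle this, I would apply Lemma~\ref{random_graph}(1) with its parameter set to $\eta := \delta/100$: if $G_1[S_0]$ contained two components $A, B$ each of size $\geq \eta n$, the lemma would give an edge of $G$ between them, contradicting that they are distinct components of $G_1[S_0]$. Hence $G_1[S_0]$ has a unique ``giant'' component $K$ with $|K| \geq |S_0| - \eta n$, and the ``residue'' $W := S_0 \setminus K$ satisfies $|W| \leq \eta n$. I would then absorb $W$ into $K$ by bridging each small component of $G_1[S_0]$ to $K$ through a single intermediate vertex $v \in V(G_1) \setminus S_0$ that is adjacent both to the component and to $K$. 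A typical external vertex has $\Theta(C)$ neighbors in $K$, since $\Pr[N(v) \cap K = \emptyset] \leq (1-p)^{|K|} \leq e^{-2C/3}$, so only $O(n e^{-C/2})$ external vertices fail this property. Adding all bridge vertices (at most $|W| \leq \eta n$ of them) to $S_0$ yields a connected dominating set $S$ with $|S| \leq (\tfrac{2}{3} - \tfrac{\delta}{3} + \eta)n < (\tfrac{2}{3} + \tfrac{\delta}{2})n$.

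Given $S$, building $T$ is routine: take any spanning tree of $G_1[S]$ and attach each $v \in V(G_1) \setminus S$ to some neighbor in $S$ via a single edge. The most delicate step of the proof is ensuring the simultaneous existence of all the bridges, which will require combining the Chernoff estimate on external vertices missing $K$ with a combinatorial argument that each small component of $G_1[S_0]$ sends enough edges into $V(G_1) \setminus S_0$ to hit at least one good bridge vertex; in particular, one must rule out that the small components ``conspire'' to have all their external neighborhoods concentrated on the rare bad external vertices.
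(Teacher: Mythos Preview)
Your connected-dominating-set strategy is viable but genuinely different from the paper's proof. The paper starts from an arbitrary spanning tree $T$ of the giant and runs a \emph{leaf-increasing} process: whenever $|L_1(T)|<(\tfrac13-\delta')n'$, a degree-counting argument shows that at least $3\delta' n'$ vertices $u^*\in L_2(T)$ have both neighbors non-leaves, and for each such $u^*$ there are $\Omega(n)$ ``producer'' pairs $\{u_1,u_2\}$ whose insertion (together with deletion of $u^*u_2$) turns $u^*$ into a new leaf. Random edges are then sprinkled one at a time (the paper passes to $G(n,m)$ and uses a two-round exposure $G_1\cup G_2$), and a stochastic-domination comparison with a Binomial shows the process halts in $O(n)$ steps whp. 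Your approach is more structural and avoids this iterative analysis, at the price of the bridging step you rightly flag as delicate; the paper's local edge-swap avoids that bridging entirely.

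Beyond the bridging, two points in your plan need repair. First, you sample $R\subseteq V(G_1)$ and then bound $\Pr[v\in B]\le (1-p)^{|R|}$; but $V(G_1)$ is a function of the edges of $G$, so once $R$ is drawn the edges are already fixed and $(1-p)^{|R|}$---a probability over fresh Bernoulli edges---does not apply. The clean fix is to take $R\subseteq [n]$ independently of $G$ (or use a two-round exposure, as the paper does) and then work with $R\cap V(G_1)$. Second, even granting the expectation bound, Markov only gives $\Pr[|B|>n e^{-C/2}]\le e^{-C/6}$, a constant in $n$, which is not a whp statement; once $R$ is independent of $G$ the indicators $\{N_G(v)\cap R=\emptyset\}_{v\notin R}$ are mutually independent and Chernoff supplies the required concentration. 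The same dependency issue recurs in your estimate for the number of external vertices missing $K$, since $K$ is itself determined by the edges. Finally, a minor logical gap: from ``no two components of $G_1[S_0]$ each have size $\ge\eta n$'' one cannot immediately conclude $|W|\le\eta n$; you need one further application of Lemma~\ref{random_graph}(1) to the largest component against its complement in $S_0$.
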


\begin{proof}
First note that we may consider the union model $G_1\cup G_2$, where $G_i\sim G(n, p_1)$, and \( p_i = \frac{C_i}{n} \), $i=1,2$. For $G_1$, by item (2) of Lemma~\ref{random_graph}, 
when $C_1$ is sufficient large, the giant component of $G_1$ has size $(1-\rho)n$, where $\rho\leq \delta/1000$. For $G_2$, by Theorem 1.4 in~\cite{frieze2024introduction}, it suffices to prove the corresponding result for the model \( G(n, m) \) instead of working directly with \( G(n, p_2) \), 
with \( m = Cn \), where \( C \) is a sufficiently large constant.

Let \( T \) be a spanning tree of the giant component of $G_1$ with $n'=(1-\rho)n$ vertices. Define $\delta'$ to be such that 
$(\frac 13-\delta)=(\frac 13-\delta')(1-\rho)$. Then we partition the vertex set into degree classes \( L_k = L_k(T) \), and define its inner tree \( I(T) \). 

Suppose that $|L_1| < ( \frac{1}{3} - \delta')n'$;
otherwise, the conclusion already holds and there is nothing to prove.

\noindent We \textbf{claim} that 
$|L_2 \setminus L_1(I(T))|\geq 3\delta' n'$. 
\begin{proof}[Proof of the Claim]
    Indeed, we have 
that 
\begin{equation}
    \sum_{k\geq 3} k|L_k|+2 |L_2| +|L_1|=2n'-2<2n'.
\end{equation}
Together with $\sum_{k\geq 1}|L_k|=n'$, we obtain that 
\begin{equation}
    \sum_{k \geq 3}(k-2) |L_k|
    < |L_1|,
\end{equation}
and hence 
\begin{equation}
    \sum_{k \geq 3} |L_k|
    < |L_1|.
\end{equation}
Then it follows that 
$|L_2|+2|L_1|>n'$, and therefore 
$|L_2|>(\frac 13 +2\delta') n'$. 
Note that each vertex in $L_1(I(T))$ is adjacent to at least a vertex in $L_1$ and therefore
$|L_1(I(T))|\leq |L_1|< ( \frac{1}{3} - \delta')n'$. Hence $|L_2\setminus L_1(I(T))|\geq 3\delta' n'$. 
\end{proof}
Next, let \( u^* \in L_2 \setminus L_1(I(T)) \), and observe that both neighbors of \( u^* \) are non-leaves. Let \( u_1 \in I(T) \) be a vertex not adjacent to any neighbor of $u^*$ in $T$. Note that removing $u^*$ from $T$ disconnects the tree into two components, and one of the neighbors of $u^*$ lies in a different component from $u_1$. We  denote this neighbor as $u_2$.   
The pair $\{u_1 u_2\}$ can be used to construct a new tree with one additional leaf, namely,
$$T' = \left( T \setminus \{ u_2 u^* \} \right) \cup \{ u_1 u_2 \}.$$  
We call such a pair $\{u_1u_2\}$ a \emph{producer}.

Let us now describe an algorithm to iteratively add leaves to the tree 
$T$.

\begin{algorithm}[H]
\caption{Leaf-Increasing Algorithm}
\begin{algorithmic}[1]
\State \textbf{Initialize} \( T^{(0)} := T \), and set \( m := 0 \).
%\ForAll{$m\in \mathbf{N}$}
\While{ \( |L_1(T^{(m)})| < (\frac 13 -\delta') n' \)}
\State randomly choose an edge $e_m$ to \( T^{(m)} \)
    \If{\( e_m = u_1 u_2 \) is a producer edge for \( T^{(m)} \)}
    \State Update: \(
       T^{(m+1)} := \left( T^{(m)} \setminus \{ u_2 u^* \} \right) \cup \{ e_m \}, \ m:=m+1.
        \)
    \Else       
    \State Set: 
      \(
     T^{(m+1)} := T^{(m)},\  m:=m+1.
      \)
    \EndIf
    \EndWhile
\end{algorithmic}
\end{algorithm}

Note that, when the algorithm stops, we can update \( T = T^{(m)} \).

\noindent We \emph{Claim} that, {\bf whp}, the above algorithm stops when \( m \leq C_3 n \), for some $C_3$ to be determined later.
\begin{proof}[proof of the Claim]
Let us define, for each \( 1 \leq j \leq C_3 n \), the indicator random variable:
\begin{equation} 
Y_j = 
\begin{cases}
1, & \text{if } e_j \text{ is a producer or the algorithm terminates at time } m \leq j, \\
0, & \text{otherwise}.
\end{cases}
\end{equation}

Observe that whenever the \textbf{Leaf-Increasing Algorithm} does not terminate, we have
$|L_1(T^{(m)})| < ( \frac{1}{3} - \delta' )n'$ and $|L_2(T^{(m)}) \setminus L_1(I(T^{(m)})) | \geq 3\delta' n'$. 
Note that there are at least $3\delta' n'$ valid choices for a vertex~$u^*$. Moreover, the tree $T^{(m)}$ contains at least $( \frac{2}{3} + \delta')n'$ non-leaf vertices, each of which cannot join both of the two neighbors of $u^*$ in $T$. Therefore we can find at least $3\delta'(\frac 23+\delta')n'^2$ choices of the producers. 

Hence,
\begin{equation}
    \mathbb{E}\left[Y_j \mid Y_1, \dots, Y_{j-1}\right] \geq \frac{3\delta'(\frac 23+\delta')n'^2}{\frac{n'(n'-1)}{2}}\geq 4\delta'.
\end{equation}

If at time $m=C_4n'$ with $C_4 =\frac{1}{4\delta'}$, the algorithm has not terminated yet, we must have
\begin{equation}
    \sum_{j=1}^{C_4 n'} Y_j < \big(\frac 13 - \delta'\big) n'.
\end{equation}

By taking 
$X \sim \text{Bin}(C_4 n', 4\delta')$,
\begin{equation} 
\mathbb{P} \big[ \sum_{j=1}^{C_4 n'} Y_j < (\frac 13-\delta')n'\big]
\leq \mathbb{P} [ X < (\frac 13-\delta')n' ]
\leq e^{-\Omega(n')} = o(1).
\end{equation}
The first inequality follows from Lemma 28.24 of ~\cite{frieze2024introduction} and the second inequality follows by the Chernoff bound. 
\end{proof}

Hence, {\bf whp}, we can obtain \( T \) with at least $(\frac 13-\delta')n'=(\frac 13-\delta)n$ leaves. 
\end{proof}
For simplicity, we denote $L(T)=L_1(T)$ the set of leaves of $T$. Note that $L(I(T))$ is therefore the set of 
leaves of $I(T)$. 
\begin{Cor}\label{L}
For any fixed \( \delta > 0 \), there exists a constant \( C = C(\delta) \) such that the random graph \( G \sim G(n, C/n) \) with high probability contains a tree \( T \) satisfying
\begin{equation}
|L(T) \cup L(I(T))| \geq ( \tfrac{5}{9} - \delta )n.
\end{equation}
\end{Cor}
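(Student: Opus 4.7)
The plan is to iterate the leaf-increasing strategy of Proposition~\ref{many_leaves} twice, first on $T$ and then on its inner tree $I(T)$. Split $G$ as $G_1 \cup G_2$ with $G_i \sim G(n, C_i/n)$ independent and $C_1, C_2$ large enough. Apply Proposition~\ref{many_leaves} to $G_1$ alone to obtain a spanning tree $T$ of its giant component with $|L(T)| \geq (\frac{1}{3} - \frac{\delta}{2}) n$. If already $|L(T)| \geq (\frac{5}{9} - \delta) n$ we are done; otherwise $V' := V(I(T))$ is a set of linear size $n' = n - |L(T)|$, fixed once $G_1$ is revealed.

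Next, run the Leaf-Increasing Algorithm from the proof of Proposition~\ref{many_leaves} with $I(T)$ playing the role of the ambient tree and $G_2[V']$ playing the role of the ambient random graph. Conditional on $G_1$, the graph $G_2[V']$ is distributed as $G(n', p_2)$ with $p_2 = C_2/n$, which is dense enough for the algorithm's hypotheses since $n' = \Omega(n)$. At each step, sample a random edge $v_1 v_2$ from $G_2[V']$; if there is $v^* \in L_2(I(T)) \setminus L_1(I(I(T)))$ with $v_1, v_2$ in different components of $I(T) \setminus \{v_2 v^*\}$, execute the standard swap. The combinatorial Claim inside the proof of Proposition~\ref{many_leaves}, applied now to $I(T)$ in place of $T$, gives $\Omega(n^2)$ producer pairs whenever $|L(I(T))| < (\frac{1}{3} - \frac{\delta}{2}) n'$, and the identical Chernoff-indicator argument then ensures whp that after $O(n)$ iterations the modified tree $T^*$ on $V'$ satisfies $|L(T^*)| \geq (\frac{1}{3} - \frac{\delta}{2}) n'$.

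Let $\tilde T$ be the spanning tree obtained from $T$ by replacing its restriction $I(T)$ with $T^*$. Since only edges inside $V'$ are touched in Phase 2, every leaf of $T$ remains a leaf of $\tilde T$, so $L(T) \subseteq L(\tilde T)$. A short case analysis shows that each leaf $v$ of $T^*$ also contributes to $L(\tilde T) \cup L(I(\tilde T))$. Indeed, if $v$ has no $T$-leaf neighbour then $\deg_{\tilde T}(v) = 1$ and $v \in L(\tilde T)$; otherwise $\deg_{\tilde T}(v) \geq 2$, and since $n' \geq 3$ forces $v$'s unique $T^*$-neighbour $u$ to have $\deg_{T^*}(u) \geq 2$ (hence $u \notin L(\tilde T)$), we get $\deg_{I(\tilde T)}(v) = 1$ and $v \in L(I(\tilde T))$. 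Because $L(T)$ and $L(T^*)$ are disjoint (they lie outside and inside $V'$ respectively),
\begin{equation*}
|L(\tilde T) \cup L(I(\tilde T))| \geq |L(T)| + |L(T^*)| \geq |L(T)| + \Bigl(\frac{1}{3} - \frac{\delta}{2}\Bigr)(n - |L(T)|) \geq \Bigl(\frac{5}{9} - \delta\Bigr) n,
\end{equation*}
using $|L(T)| \geq (\frac{1}{3} - \frac{\delta}{2}) n$ and that the middle expression is increasing in $|L(T)|$.

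The main obstacle is the last step's bookkeeping: Phase 2 formally acts on the virtual tree $T^*$ on $V'$, but the conclusion involves the actual inner tree $I(\tilde T)$ of the whole spanning tree $\tilde T$, and the two can differ whenever a swap causes some vertex of $V'$ to drop to $\tilde T$-degree one and thereby leave $V(I(\tilde T))$. The case analysis above is what makes the accounting go through: each leaf of $T^*$ is charged either to $L(\tilde T)$ or to $L(I(\tilde T))$, with no double counting because those two sets are disjoint by definition.
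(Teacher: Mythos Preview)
Your proof is correct and follows essentially the same approach as the paper: split $G$ into two independent random graphs, apply Proposition~\ref{many_leaves} to the first to get a tree with many leaves, run the Leaf-Increasing Algorithm on its inner tree using the second copy, then reattach the original leaves and count. Your final case analysis (showing each leaf of $T^*$ lands in $L(\tilde T)\cup L(I(\tilde T))$) is in fact more carefully justified than the paper's one-line assertion $|L(T)\cup L(I(T))|\ge |L(T_1)|+|L(T_2)|$; the only minor slip is writing $n'=n-|L(T)|$ rather than $(1-\rho)n-|L(T)|$, but since $\rho\ll\delta$ this is absorbed into the constant exactly as in the paper.
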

\begin{proof} By Proposition \ref{many_leaves}, there is a constant \( C \), such that {\bf whp} the random graph \( G(n, C/n) \) contains a tree \( T_1 \) with 
$|L(T_1)| \geq ( \frac{1}{3} - \frac{\delta}{2})n$. 
Then, within the inner tree \( I(T_1) \), we apply the {\bf Leaf-Increasing Algorithm} in the proof of Proposition~\ref{many_leaves} to a new independent copy of \( G(n, p) \) to update \( I(T_1) \). Suppose $T_2$ is a tree obtained by modifying $I(T_1)$. Note that $L(I(T_1))\subseteq L(T_2)$. With high probability, this results in
$|L(T_2)| \geq ( \frac{1}{3} - \frac{\delta}{2} ) ( \frac{2}{3} + \frac{\delta}{2})(1 - \rho)n$, where \( \rho \ll \delta \). Now, define a new tree $T$ by attaching each leaf $\ell\in L(T_1)$ back to the same vertex of $L(T_2)$ to which it was attached in the original tree $T_1$. Moreover, \( T \) satisfies
$|L(T) \cup L(I(T))|\geq |L(T_1)|+|L(T_2)| \geq \left( \tfrac{5}{9} - \delta \right)n.$ 
\end{proof}

Now we are ready to present the proof of Theorem~\ref{disc_forest}. 
\begin{proof}[Proof of Theorem~\ref{disc_forest}]
%Let \( \delta \) be a small constant such that \( \tfrac{5}{9} - 3\delta > \tfrac{1}{2} \).  
%Choose \( C \) sufficiently large depending on \( \delta \), and consider a random graph \( G \sim G(n, p) \) with \( p = C/n \).  
%Fix an arbitrary edge colouring \( \chi : E(G) \to \{-1, +1\} \). 
%By Corollary~\ref{L}, we can obtain a tree \( T \) such that
%\( |L(T) \cup L(I(T))| \geq \left( \tfrac{5}{9} - \delta \right)n.
%\) Fix a constant \( \varepsilon \ll \delta \).  
%We may assume that the discrepancy of \( T \) is strictly smaller than \( \varepsilon n \); otherwise, the theorem holds trivially.
%Let \( M := L(T) \cup L(I(T)) \) and $N(M)$ be the set of neighbors of the vertices in $M$. In $E(G[M\cup N(M)])$, there are at least $\left( \tfrac{5}{9} - \delta \right)n$ edges.  
%Then we can find two disjoint subsets of edges, \( E_1 \) and \( E_2 \), each of size \( 2\delta n \), such that
%\begin{enumerate}
%    \item every edge in \( E_1 \) is coloured \( +1 \);
%    \item every edge in \( E_2 \) is coloured \( -1 \);
%    \item and \( E_1 \cup E_2 \subseteq E(G[M\cup N(M)]) \).
%\end{enumerate}

%After reducing the sizes of \( E_1 \) and \( E_2 \) to exactly \( \delta n \), we can also ensure that the edges are pairwise disjoint in terms of incident vertices.
Let $\delta$ be a small constant such that $\frac{5}{9}-5\delta>\frac{1}{2}$. Choose $C$ sufficiently large (depending on $\delta$) and consider a random graph $G\sim G(n,p)$ with $p=C/n$. Fix an arbitrary edge colouring $\chi: E(G)\rightarrow \{-1,+1\}$. By Corollary~\ref{many_leaves}, we obtain a tree $T$ such that 
\begin{equation}
   |L(T)\cup L(I(T))|\geq (\frac{5}{9}-\delta)n.
\end{equation}
Fix a constant $\varepsilon \ll \delta$.  
We may assume that the discrepancy of \( T \) is strictly less than \( \varepsilon n \); otherwise, the theorem follows trivially. Let 
\begin{equation}
    M_1=L(T)\cup N_T(L(T)) \text{ and } M_2=L(I(T))\cup N_{I(T)}(L(I(T))).
\end{equation} 

\noindent {\bf Case 1. } Suppose $|L(I(T))|<2\delta n$.  Then \begin{equation}
    |L(T)|\geq \Big(\frac{5}{9}-3\delta \Big)n>\frac{1}{2}n+2\delta n.
\end{equation} 
Within the subgraph $T[M_1]$, there are at least $\delta n$ edges colored $+1$ and at least $\delta n$ edges colored $-1$. Let $E_1$ (respectively $E_2$) denote the set of edges in $T[M_1]$  colored $+1$ (respectively $-1$).  For each edge $e\in E_1\cup E_2$, let $v=v_e$ denote the leaf vertex incident to $e$ in $T$. Define 
\begin{equation}
    V_1=\{v_e : e\in E_1\} \text{ and } V_2=\{v_e : e\in E_2\}.  
\end{equation}
By Lemma~\ref{random_graph}, the bipartite subgraph induced on $G[V_1\cup V_2]$ contains a matching of size at least $\frac 18 \delta n$. 
Therefore, at least $\frac{1}{16}\delta n$ of these matching edges are assigned the same color, say $+1$. For each such edge $e=uv$ in the matching, where $u\in V_2$, we modify the tree $T$ by replacing the edge $e_u$ incident to $u$ in $E_2$ (previously colored $-1$) with the edge $uv$, obtaining   
\begin{equation}
    T:=(T\setminus \{e_u\}) \cup \{uv\}. 
\end{equation}
Clearly, after this modification, the new tree has discrepancy at least 
\begin{equation}
    \Big(\frac {1}{32}\delta -\varepsilon\Big)n. 
\end{equation}

\noindent {\bf Case 2. } Suppose $|L(I(T))|\geq 2\delta n$. If  $T[M_1]$ contains at least $\delta n$ edges colored $+1$ and at least $\delta n$ edges colored $-1$, then argument proceeds exactly as in the previous case. 

Now assume $|L(I(T))|\geq 2\delta n$ and that there are more than $\delta n$ edges in $T[M_1]$ colored with one of the colors, say $+1$. Note that the total number of edges in $T[M_1\cup M_2]$ is at least $(\frac{5}{9}-\delta)n$, which implies there are at least $3\delta n$ edges of each color in this subgraph. From $E(T[M_1])$, arbitrarily choose $\delta n$ edges of color $+1$ and denote this set by $E_1$. Observe that the edges in $E_1$ are adjacent to at most $\delta n$ edges of color $-1$ in $E(T[M_2])$. Then, in the subgraph $T[(M_1\cup M_2)\setminus V(E_1) ]$, there are still at least $\delta n$ edges colored $-1$. We randomly choose $\delta n$ such edges and denote this set by $E_2$. By construction, $V(E_1)\cap V(E_2)=\emptyset$. For each edge $e\in E_1\cup E_2$, define $v_2$ as follows:
\begin{enumerate}
    \item if $e\in E(T[M_1])$, let $v_e$  be the leaf vertex in $T$ incident to $e$; 
    \item if $e\in E(T[M_2])$, let $v_e$ be the leaf vertex in $L(I(T))$ incident to $e$.
\end{enumerate}
 Let 
 \begin{equation}
     V_1=\{v_e : e\in E_1\}, \ V_2=\{v_e : e\in E_2\}.
 \end{equation} 
 As in the previous case, by Lemma \ref{random_graph}, the bipartite subgraph induced on $G[V_1\cup V_2]$ contains a matching of size at least $\frac 18 \delta n$. If more edges in the matching are colored $+1$, we update the tree by adding the +1-colored matching edges and removing corresponding $-1$-colored edges from $E_2$.  Specifically, for each such edge $e=uv$ with $u\in V_2$, define 
 $$T:=(T\setminus \{e_u\}) \cup \{uv\},$$ where $e_u\in E_2$ is the edge originally colored $-1$ and incident to $u$. If instead more matching edges are colored $-1$, we perform the analogous operation:  add the $-1$-colored matching edges and remove the corresponding $+1$-colored edges from $E_1$. The same argument applied in either case. 

Finally, note that we can take $\varepsilon \ll \delta$. Moreover, the number of vertices outside of the giant component is at most $\rho n$, with $\rho\ll \delta$. The conclusion 
follows immediately. 
\end{proof}

\begin{proof} [Proof of Theorem~\ref{main1}]
Let \( p_1 = \frac{\log n + \omega(1)}{n} \) and \( p_2 = \frac{C}{n} \), where \( C > 0 \) is a sufficiently large constant. Define the graph \( G(n, p) = G(n, p_1) \cup G(n, p_2) \). Then \( p = \frac{\log n + \omega(1)}{n} \), as desired.  
Note that \( G(n, p_1) \) is connected with high probability.
Moreover, Theorem~\ref{disc_forest} implies that, {\bf whp}, \(G(n, p_2) \) has the property that for any 2-edge-coloring, one can find  a spanning forest
$F$ with $\Omega(n)$ leaves such that the difference between the number of edges of the two colors in $F$ is at least \( 2\varepsilon n \).
Now, take such a spanning forest \( F \) from \( G(n, p_2) \), and for each component, add a connecting edge from \( G(n, p_1) \) (which is connected).  This yields a spanning tree \( T \) in \( G(n, p) \) with a linear number of leaves and at most \( \delta n \) additional edges.

Hence, the discrepancy for the tree $T$ remains large. Indeed, more than \( \frac{1 + \varepsilon - \rho}{2}n \) edges of $T$ are assigned the same color.
This completes the proof.
\end{proof}

\section{proof of Theorem~\ref{classical-perturb}}
The proof of Theorem~\ref{classical-perturb} relies on the following result which was put forward in ~\cite{gishboliner2022discrepancies}. 
Following~\cite{gishboliner2022discrepancies}, for any graph $G$, let us denote by $s(G)$ 
the minimum integer $s$ such that the vertex set $V=V(G)$ admits a partition $V=V_1\cup V_2\cup S$, where $|V_1|=|V_2|$, $E(V_1,V_2)=\emptyset$, and $|S|=s$. 

\begin{theorem}\label{equipartition}[Theorem 1.1 in~~\cite{gishboliner2022discrepancies}]
There exists a constant $C>0$ such that any $3$-connected graph $G$ with any coloring $\chi:E(G)\rightarrow \{+1,-1\}$ contains a spanning tree $T$ such that
$$\left|\sum_{e\in E(T)}\chi(e)\right|\geq Cs(G).$$
\end{theorem}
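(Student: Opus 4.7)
The plan is to prove Theorem~\ref{equipartition} by combining a matroid lower bound on discrepancy with a structural extraction of a small balanced vertex separator.

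First, I would use the graphic matroid of $G$ to reduce discrepancy to component counts. Let $R$ and $B$ denote the red and blue subgraphs and let $c(R),c(B)$ be their numbers of connected components (isolated vertices included). The rank of $R$ in the graphic matroid of $G$ equals $n-c(R)$, so a maximum red forest has $n-c(R)$ edges; extending it arbitrarily to a spanning tree of $G$ produces a spanning tree $T_{+}$ with exactly $n-c(R)$ red and $c(R)-1$ blue edges. Symmetrically one obtains a spanning tree $T_{-}$ with $c(B)-1$ red and $n-c(B)$ blue edges. The signed discrepancies of $T_{+}$ and $T_{-}$ are $n+1-2c(R)$ and $-(n+1-2c(B))$, so
\[
\max_T \Big|\sum_{e\in E(T)}\chi(e)\Big|\ \geq\ n+1-2\min\bigl(c(R),c(B)\bigr).
\]

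Second, I would argue by contrapositive. Suppose every spanning tree of $G$ has discrepancy strictly less than $Cs(G)$ for a small absolute constant $C$ to be fixed. Then both $c(R)$ and $c(B)$ exceed $(n+1-Cs(G))/2$, so each color subgraph has close to $n/2$ components of average size close to $2$. The goal from this point is to build, out of this structural constraint, a balanced partition $V=V_1\cup V_2\cup S$ with $|V_1|=|V_2|$, $E_G(V_1,V_2)=\emptyset$, and $|S|<s(G)$, contradicting the minimality in the definition of $s(G)$.

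Third, I would encode the relevant structure in the bipartite multigraph $H$ whose two vertex classes are the red components $\{R_i\}$ and the blue components $\{B_j\}$, with one $H$-edge $(R_i,B_j)$ for each vertex $v\in R_i\cap B_j$ of $G$. Then $|V(H)|=c(R)+c(B)$, $|E(H)|=n$, and $H$ is connected because $G$ is; its cyclomatic number $n-c(R)-c(B)+1$ is at most $Cs(G)$, so $H$ is a tree plus at most $Cs(G)$ extra edges. Any partition $V(H)=X_1\cup X_2\cup U$ with no $H$-edges between $X_1$ and $X_2$ lifts to the desired partition of $V(G)$ by declaring $v\in V_k$ when both its red and blue components lie in $X_k$, and $v\in S$ otherwise. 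Indeed, any $G$-edge between $V_1$ and $V_2$ would be monochromatic, and its containing color-component would then have to lie in both $X_1$ and $X_2$, which is impossible. The plan is then to apply a weighted centroid/balanced-separator argument to the near-tree $H$, with each $H$-vertex weighted by the number of $G$-vertices it represents, and to use $O(Cs(G))$ extra vertices to absorb the excess edges of $H$ beyond its spanning tree and to enforce exact equality $|V_1|=|V_2|$. For $C$ sufficiently small, the resulting $|S|$ drops below $s(G)$.

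The main obstacle is this last step: a single vertex of $H$ can carry large $G$-weight, so a naive centroid of $H$ may lift to an inflated $|S|$. Overcoming this is where the hypothesis that $G$ is $3$-connected must intervene: a large red or blue component $C$ of the coloring must be attached to the rest of $G$ through at least three vertex-disjoint paths of the opposite color, and this constrains how $C$ can sit as a cut-vertex of $H$ without $H$ itself already exhibiting a balanced separation of total $G$-weight comparable to $|C|$. Formalizing this trade-off and tuning the constant $C$ so as to actually attain the contradiction $|S|<s(G)$ is the principal technical burden of the argument.
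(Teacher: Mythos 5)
First, a point of context: the paper you were given does not prove this statement at all. Theorem~\ref{equipartition} is quoted verbatim as Theorem~1.1 of~\cite{gishboliner2022discrepancies} and used as a black box in the proof of Theorem~\ref{classical-perturb}. There is therefore no in-paper proof to compare against, and your attempt has to be judged on its own terms.

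Your preparatory steps are sound. A maximal red forest has $n-c(R)$ edges, and extending it to a spanning tree of $G$ can only use blue edges (any additional red edge would close a cycle inside a red component), so one obtains spanning trees of signed discrepancy $n+1-2c(R)$ and $-(n+1-2c(B))$; this correctly yields $\max_T\bigl|\sum_{e\in T}\chi(e)\bigr|\ge n+1-2\min(c(R),c(B))$. The bipartite incidence multigraph $H$ on red and blue components, with one parallel edge per vertex of $G$, is connected because $G$ is, has $n$ edges, and has cyclomatic number $n-c(R)-c(B)+1< Cs(G)$ under the contradiction hypothesis. The lifting rule from a separation $V(H)=X_1\cup X_2\cup U$ with $E_H(X_1,X_2)=\emptyset$ to a separation of $V(G)$ is also verified correctly.

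The genuine gap is precisely the one you name at the end, and it is not a detail but the heart of the theorem. To land $|S|<s(G)$ you must exhibit a set $U\subseteq V(H)$ that separates $H$ into two parts balanced in lifted $G$-weight, with the quantity that actually bounds $|S|$ --- the number of $H$-edges meeting $U$, which is controlled by $\sum_{h\in U}\deg_H(h)$, where $\deg_H(R_i)=|V(R_i)|$ --- strictly below $s(G)$. The near-tree structure of $H$ controls how \emph{many} $H$-vertices you need in $U$, not their degrees, and nothing in your write-up excludes the scenario in which every reasonable balanced separator of $H$ must pass through an $H$-vertex of degree $\Theta(n)$, which would blow up $|S|$. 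Your appeal to $3$-connectivity (three internally disjoint opposite-colour escape paths from a large component) is aimed at the right hypothesis, since the statement fails without it, but you derive from it no quantitative bound on the degrees of the vertices that must enter $U$, nor do you explain how to repair the exact balance condition $|V_1|=|V_2|$ while keeping $|S|$ small. Until that trade-off is made precise and an explicit constant $C>0$ is extracted, what you have is a plausible strategy rather than a proof; it may or may not track the actual argument in~\cite{gishboliner2022discrepancies}, which the present paper does not reproduce.
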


Now we prove 
a structural lemma as follows.
\begin{lemma}\label{strong-connect}
For any $d=d(n)=o(n),\, \alpha\in(0,1)$ and graph $G$ on $n$ vertices with 
$\delta(G)\geq \alpha n$, there exists $U\subset V(G)$ such that $|U|=o(n)$ and each component of $G[V(G)\backslash U]$ is $d$-connected and contains at least $\frac{\alpha}{2}n$ vertices.  
\end{lemma}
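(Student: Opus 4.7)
The plan is to construct $U$ by a greedy/iterative removal argument: start from $U=\emptyset$ and repeatedly find a small vertex cut inside some component of $G[V\setminus U]$ that fails to be $d$-connected, throwing the cut into $U$. The key is to show that this process terminates after only $O_\alpha(1)$ rounds, so that $|U|=O_\alpha(d)=o(n)$.

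More precisely, I would set $U_0=\emptyset$ and at step $i\ge 0$ examine the components of $G[V\setminus U_i]$. If each of them is $d$-connected, stop; otherwise pick a component $C_i$ that is not $d$-connected, choose a separator $S_i\subseteq V(C_i)$ with $|S_i|\le d-1$ (which exists by Menger's theorem / the definition of $d$-connectedness), and set $U_{i+1}:=U_i\cup S_i$. After $k$ iterations we have $|U_k|\le k(d-1)$.

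To analyse termination and component sizes I would establish two invariants. First, \emph{every vertex of $V\setminus U_k$ has large degree in $G[V\setminus U_k]$}: since $\delta(G)\ge \alpha n$, any $v\notin U_k$ satisfies
\begin{equation}
\deg_{G[V\setminus U_k]}(v)\ge \alpha n-|U_k|.
\end{equation}
Hence every component of $G[V\setminus U_k]$ has at least $\alpha n-|U_k|+1$ vertices, so in particular there are at most $n/(\alpha n-|U_k|)$ components. Second, \emph{each iteration strictly increases the number of components}: removing a separator from the component $C_i$ of $G[V\setminus U_i]$ produces at least two components of $G[V\setminus U_{i+1}]$, while no previously existing component is merged. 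Combining these, as long as $|U_k|\le \alpha n/2$, the component count stays in $[1,2/\alpha]$ and strictly grows each step, so the process must halt after at most $2/\alpha$ rounds. This yields $|U|\le (2/\alpha)(d-1)$, and since $d=o(n)$ we have $|U|=o(n)$ and every remaining component has size at least $\alpha n-|U|\ge \alpha n/2$ for $n$ sufficiently large, as required.

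The only subtle point---and the main thing to verify carefully---is the bookkeeping that the invariant $|U_k|<\alpha n/2$ is maintained \emph{throughout} the iteration (not only at the end), so that every intermediate component is genuinely large enough that the boundary cases of the $d$-connectedness definition (e.g.\ components with at most $d$ vertices) never arise. This is immediate once one notes that the loop runs at most $2/\alpha$ times and $d=o(n)$, which forces $(2/\alpha)d\ll \alpha n/2$ for large $n$; hence the whole argument is tight but causes no real difficulty. No additional probabilistic or extremal input is needed beyond the minimum-degree hypothesis and the existence of small separators for non-$d$-connected graphs.
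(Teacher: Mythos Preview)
Your proposal is correct and follows essentially the same approach as the paper: iteratively remove small separators from components that fail to be $d$-connected, and use the minimum-degree condition to bound component sizes and hence the number of iterations by $O(1/\alpha)$. The only difference is cosmetic---you bound the number of rounds directly via the strictly increasing component count, whereas the paper reaches the same bound by assuming a first too-small component and deriving a contradiction with $\delta(G)\ge\alpha n$.
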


\begin{proof}
Initialize $U = \emptyset$. While there exists a connected component $G_1$ of $G[V(G)\setminus U]$ and a set $W \subset V(G_1)$ with $|W| \le d$ such that $G_1 \setminus W$ is disconnected, add all vertices of $W$ to $U$. We claim that no component created during this process has fewer than $\alpha n/2$ vertices. Suppose $G'$ is the first component that violates this. At the previous step, all components had size at least $\alpha n/2$, so there are at most $2/\alpha$ components. Thus $|U| \le (\frac{2}{\alpha}+1)d$, and for any $v \in V(G')$ we have $d(v) \le |V(G')| + |U| \le \frac{\alpha n}{2} + (\frac{2}{\alpha}+1)d < \alpha n$, contradicting $\delta(G) \ge \alpha n$. Therefore, the procedure stops with each component $d$-connected and of size at least $\alpha n/2$, and $|U| = o(n)$.
\end{proof}

Intuitively, adding a small number of random edges to a sufficiently dense graph can significantly improve its connectivity, even if the original graph is not connected or not highly connected. In particular, a sparse random perturbation typically eliminates small vertex cuts and “bridges” between components, resulting in a graph that is highly robust. The following lemma formalizes this idea for 3-connectivity:

\begin{lemma}\label{3-connected}
    For any $\alpha\in (0,1)$, let $G_{\alpha}$ be a graph on $n$ vertices with $\delta(G_{\alpha})\geq \alpha n$, and a probability function $p=p(n)$ with $\frac{1}{n^2}\ll p(n)$. Then with high probability, the randomly perturbed graph $H=G_{\alpha}\cup G(n,p)$ is 3-connected.
\end{lemma}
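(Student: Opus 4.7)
The strategy is to use Lemma \ref{strong-connect} to decompose $G_\alpha$ into a constant number of highly connected blocks, and then rely on short matchings in $G(n,p)$ to link every pair of blocks robustly.

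First apply Lemma \ref{strong-connect} to $G_\alpha$ with parameter $d = 3$, yielding a set $U \subseteq V(G_\alpha)$ of size $|U| = O_\alpha(1)$ and components $C_1,\ldots,C_k$ of $G_\alpha[V\setminus U]$ (with $k \le 2/\alpha$), each $3$-connected in $G_\alpha$ and of size $|C_i| \ge \alpha n/2$. Two properties follow immediately: for any pair $\{x,y\} \subseteq V(H)$, every $C_i \setminus \{x,y\}$ is connected in $G_\alpha$ (by $3$-connectivity), and every $u \in U \setminus \{x,y\}$ retains at least $\alpha n - O_\alpha(1)$ $G_\alpha$-neighbours in $\bigcup_i C_i$, so $u$ is joined by a $G_\alpha$-edge to some $C_i \setminus \{x,y\}$.

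The key probabilistic step is to show that w.h.p.\ the bipartite subgraph $B_{ij}$ of $G(n,p)$ induced between $C_i$ and $C_j$ contains a matching of size at least $3$ for every pair $(i,j)$. Its expected edge count is $\mu_{ij} = p|C_i||C_j| \ge p\alpha^2 n^2/4$, which tends to infinity by the hypothesis $p \gg 1/n^2$. Let $M_3$ denote the number of $3$-matchings in $B_{ij}$. A direct count gives $\mathbb{E}[M_3] = \Theta(\mu_{ij}^3)$, and a second-moment computation, in which the dominant correlation comes from pairs of $3$-matchings sharing a single edge, yields $\mathrm{Var}(M_3) = O(\mu_{ij}^5)$. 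Hence $\mathbb{P}[M_3 = 0] = O(1/\mu_{ij}) = o(1)$, and a union bound over the $O_\alpha(1)$ component pairs $(i,j)$ gives the matching statement.

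Conditioning on the two high-probability events above, fix any $\{x,y\} \subseteq V(H)$. Each $C_i \setminus \{x,y\}$ is connected, each $u \in U \setminus \{x,y\}$ attaches to some $C_i$ via a $G_\alpha$-edge, and for every pair $(i,j)$ the $3$-matching in $B_{ij}$ contains at least one edge avoiding $\{x,y\}$ (two vertices can hit at most two pairwise disjoint edges); this surviving edge links $C_i \setminus \{x,y\}$ and $C_j \setminus \{x,y\}$ in $H \setminus \{x,y\}$. Thus $H \setminus \{x,y\}$ is connected, and since $\{x,y\}$ was arbitrary, $H$ is $3$-connected.

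The main obstacle is handling the sparse regime $p \gg 1/n^2$. The naive approach, which first fixes $\{x,y\}$ and then seeks a surviving random edge between each pair of components, requires a union bound over $\binom{n}{2}$ choices of $\{x,y\}$ and would force the much stronger hypothesis $p \gg \log n/n^2$. The crucial saving is that ``$B_{ij}$ has matching number at least $3$'' is a property of $G(n,p)$ alone, independent of $\{x,y\}$, so the union bound is only over the $O_\alpha(1)$ component pairs and the second-moment argument succeeds for any $\mu_{ij} \to \infty$.
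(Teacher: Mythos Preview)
Your proof is correct and follows the same overall strategy as the paper: decompose $G_\alpha$ into $O_\alpha(1)$ linear-sized $3$-connected blocks, then use a second-moment argument to find a $3$-matching in $G(n,p)$ between every pair of blocks, so that removing any two vertices leaves all blocks connected and mutually linked. The probabilistic step (second moment on $3$-matchings, union bound over $O_\alpha(1)$ pairs) is identical to the paper's.

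The one genuine difference is in how the decomposition is produced. The paper builds a true vertex \emph{partition} $V=V_1\cup\cdots\cup V_L$ with each $G_\alpha[V_j]$ $3$-connected, by iteratively splitting along cut vertices and separation pairs and reassigning each cut vertex to a side where it has $\Omega(n)$ neighbours. You instead invoke Lemma~\ref{strong-connect} directly with $d=3$, which (by the bound $|U|\le(\tfrac{2}{\alpha}+1)d$ implicit in its proof) gives a leftover set $U$ of size $O_\alpha(1)$, and you handle vertices of $U$ separately via the minimum-degree condition. Your route is more modular---it reuses a lemma the paper has already proved rather than redoing a bespoke decomposition---at the cost of the small extra step of reattaching $U$. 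Both approaches yield the same conclusion with the same quantitative input $p\gg 1/n^2$.
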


\begin{proof}
Let us show that {\bf whp} $H$ is $3$-connected, provided that $p=\frac{f(n)}{n^2}$, where $f(n)=\omega(1)$.

First we show that there exists a vertex  partition $V(G_\alpha)=V_1\cup\cdots\cup V_L$ such that
every $|V_j|\ge (\alpha-\varepsilon)n$ for some fixed small constant $\varepsilon$,
and moreover $G_\alpha[V_j]$ is $3$-connected.

We first handle articulation vertices. If \(G_\alpha\) has a cut vertex \(u\), then \(G_\alpha-u\) splits into several connected components; rather than deleting \(u\) permanently, we treat each resulting component together with \(u\) as a separate part by assigning \(u\) to the component in which it has the most neighbors. Thus we obtain an initial vertex-disjoint partition into parts that are 2-connected blocks, and it suffices to apply the separation-pair refinement inside each 2-connected part.  

Now take any 2-connected part \(C\) that is not \(3\)-connected and choose a minimal separation pair \(\{v_1,v_2\}\) of \(C\). 
Choose a pair $\{v_1,v_2\}$ that forms a vertex cut of $C$. 
Note that each connected component of $C-\{v_1,v_2\}$ has size at least $\alpha n - 2$, 
since every vertex in $G_\alpha$ has degree at least $\alpha n$. 
For each vertex $v_i$ in this cut, we assign each cut vertex $v_i$ to exactly one of the resulting components, specifically, to the components 
in which it has $\Omega(n)$ neighbors. Proceed inductively: whenever we  encounter a part $C$ such that the induced subgraph 
$G_\alpha[C]$ is not $3$-connected, we find a vertex cut $\{w_1,w_2\}$ of $C$ and apply the same assignment rule. 
Then, in each connected component of $C-\{w_1,w_2\}$, every vertex retains degree 
at least $\alpha n - O(1)$ in $G_\alpha$, since only a constant number of vertices are removed at each step. In particular, if some component has size less 
than $(2\alpha-\varepsilon)n$,  it must already be $3$-connected, since otherwise another $2$-vertex cut would produce a smaller piece that violates the minimum degree condition. Hence, the process must terminate after $O(1)$ steps, yielding a vertex partition $V(G_{\alpha})=V_1\cup\dots\cup V_L$, where each $G_{\alpha}[V_i]$ is 3-connected and $|V_i|=\Omega(n)$.

For each 
pair $(V_i,V_{j})$, $i\neq j$, where both $|V_i|$ and $|V_{j}|$ 
have sizes at least $cn$. Let $X$ be the number of 3-matchings between two vertex classes $V_i$ and $V_j$.
The expected number of such $3$-matchings is 
$$\mathbb{E}[X]=3!\binom{|V_i|}{3}\binom{|V_{j}|}{3}p^3=\Theta(n^6)\cdot \left(\frac{f(n)}{n^2}\right)^3=\Theta\big(f(n)^{3}\big).$$ 
A straightforward second-moment (or Janson) calculation shows that the variance is $O(f(n)^5)$, and hence by Chebyshev
\[
\mathbb{P}[X=0] \le \frac{\mathrm{Var}(X)}{(\mathbb{E}X)^2} = O\big(\frac{f(n)^5}{f(n)^6}\big)=O\big(\frac{1}{f(n)}\big).
\]
Consequently, by a union bound over the constantly many part-pairs, {\bf whp} every pair admits a 3-matching.

A union bound for all $j=1,\dots,L-1$ shows that, {\bf whp}, $G(n,p)$ provides a $3$-matching for every pair $(V_j,V_{j+1})$. This implies that $H$ is $3$-connected.
\end{proof}

Now we are ready to Prove Theorem~\ref{classical-perturb}.

\begin{proof}[Proof of Theorem~\ref{classical-perturb}]
Let $C_1$ be the constant $C$ taken in Theorem~\ref{equipartition}.  
We will present the proof for $\lambda=\frac{C_1\alpha^2}{32}$.
Given $G_{\alpha}$ with $\delta (G_{\alpha})\geq \alpha n$, let $U$ be as chosen in 
Lemma~\ref{strong-connect} with $G=G_{\alpha}$ and $d=\frac{p\alpha^2n^2}{16}$, and $H_1,\dots,H_q$ be the connected components of $G_{\alpha}[V(G_{\alpha})\backslash U]$. Since $|V(H_i)|\geq \frac{\alpha n}{2}$ holds for each $1\leq i\leq q$, we have $q\leq \frac{2}{\alpha}$.

Let $\mathcal{P}$ be the event that the bipartite subgraph of $H$ induced on $V(H_i)\cup V(H_j)$ contains a matching of size $\frac{p\alpha^2n^2}{8}$, for each $1\leq i<j\leq q$. 

{\bf Claim} $\mathcal{P}$ happens with high probability. 

\begin{proof}[Proof of Claim]
Let 
$$S_1=\{v\in V(H_i):N(v)\cap H_j\neq \emptyset\},$$
$$S_2=\{(u,v):u,v\in V(H_i),N(v)\cap N(u)\cap H_j\neq \emptyset\}.$$
Recall that $|V(H_i)|\geq \frac{\alpha n}{2},|V(H_j)|\geq \frac{\alpha n}{2},p=o(\frac1n)$, we have 
$$\mathbb{E}(|S_1|)=|V(H_i)|(1-(1-p)^{|V(H_j)|})=(1+o(1))p|V(H_i)||V(H_j)|,$$
$$\mathbb{E}(|S_2|)={{|V(H_i)|}\choose{2}}(1-(1-p^2)^{|V(H_j)|})=(\frac12+o(1))p^2|V(H_i)|^2|V(H_j)|.$$
Let $\mathcal{P}_1$ denote the event that $|S_1|\geq \frac15p\alpha^2n^2$, and let $\mathcal{P}_2$ denote the event that $|S_2|\leq \frac{1}{20}p\alpha^2n^2$.
By Chernoff bound, 
$$\mathbb{P}[\text{the complement of }\mathcal{P}_1]\leq \mathbb{P}[|S_1|\leq \frac 45p|V(H_i)||V(H_j)|]=e^{-\Omega(pn^2)}=o(1).$$
By Markov's inequality,
$$\mathbb{P}[\text{the complement of }\mathcal{P}_2]\leq \mathbb{P}[|S_2|\geq \frac{\alpha^2}{20pn}\mathbb{E}(|S_2|)]\leq\frac{20pn}{\alpha^2}=o(1).$$
Conditioned on the events $\mathcal{P}_1$ and $\mathcal{P}_2$, which hold {\bf whp}, we have $|S_1|\ge \frac15 p\alpha^2 n^2$ and $|S_2|\le \frac1{20} p\alpha^2 n^2$. 
To construct a matching, define the \emph{conflict graph} $\Gamma$ on vertex set $S_1$ by connecting $u,v\in S_1$ with an edge if $u$ and $v$ share a common neighbor in $V(H_j)$, i.e., $\{u,v\}\in S_2$. Then $|E(\Gamma)|=|S_2|\le \frac1{20}p\alpha^2 n^2$.

By Turán's theorem (or a simple greedy argument), $\Gamma$ contains an independent set $S'\subseteq S_1$ of size
\[
|S'|\ge \frac{|S_1|^2}{|S_1| + 2|E(\Gamma)|} 
\ge \frac{(\frac15 p\alpha^2 n^2)^2}{\frac15 p\alpha^2 n^2 + 2\cdot \frac1{20} p\alpha^2 n^2} 
= \frac{2}{15} p\alpha^2 n^2 > \frac{p\alpha^2 n^2}{8}.
\]

By construction, no two vertices in $S'$ share a neighbor in $V(H_j)$. Hence we can greedily select one distinct neighbor in $V(H_j)$ for each vertex in $S'$ to form a matching of size at least $|S'|\ge \frac{p\alpha^2 n^2}{8}$ between $V(H_i)$ and $V(H_j)$, as desired. Applying a union bound over all  $(i,j)$ pairs, $\mathcal{P}$ happens with high probability.
\end{proof}

%Conditioned on $\mathcal{P}_1$ and $\mathcal{P}_2$, we obtain $S\subset S_1$ by deleting an element of each pair in $S_2$ from $S_1$, $|S|\geq|S_1|-|S_2|>\frac{p\alpha^2n^2}{8}$. Note that each element of $S$ has at least one neighbor in $V_j$, and that no two elements share neighbors in $V_j$, we obtain the desired $\frac{p\alpha^2n^2}{8}$-matching by greedily choosing an edge for each element of $S$. Hence with high probability the induced bipartite of $H$ on $V(H_i)\cup V(H_j)$ contains a matching of size $\frac{p\alpha^2n^2}{8}$. 

Next we prove that $s(H)\geq \frac{p\alpha^2n^2}{32}$. Assume to the contrary that there exists a partition $V(H)=W_0\cup W_1\cup W_2$, such that $|V(W_0)|<\frac{p\alpha^2n^2}{32}=\frac d2$, $|W_1|=|W_2|>|U|$, and that $E(W_1,W_2)=\emptyset$. We prove that there exists some $1\leq i,j\leq q$ such that $V(H_i)\subset W_1\cup W_0, V(H_j)\subset W_2\cup W_0$. Since $|W_1|>|U|$, $W_1$ must intersect with some $V(H_i)$. If $V(H_i)\cap W_2\neq \emptyset$, then we can break $H_i$ into two connected components by deleting $H_i\cap W_0$, which contains no more than $\frac d2$ vertices, a contradiction with the fact that $H_i$ is $d$-connected. Hence $V(H_i)\subset W_0\cup W_1$, and similarly there exists some $j$ such that $V(H_j)\subset W_0\cup W_2$. By $\mathcal{P}$ there is a matching $M$ of edge size $2d$, and $E(M)\subset E(V(H_i), V(H_j))$. However, at most $\frac d2$ edges of $M$ intersect with $W_0$, and the remaining edges intersect with both $W_1$ and $W_2$. Consequently, $E(W_1, W_2)$ is non-empty, a contradiction. By Lemma~\ref{3-connected}, $H$ is 3-connected. By Theorem~\ref{equipartition},  we can find the desired spanning tree with high discrepancy and we complete the proof. 
\end{proof}

%{%%%%%%%%}

%Take $d=3$ in Lemma~\ref{strong-connect} and suppose we obtain $U'\subset V(G_\alpha)$ such that the connected components of $G_{\alpha}[V(G_\alpha)\backslash U']$ are $I_1, I_2,\dots,I_r$, where $I_j$ is $3$-connected for each $j$, $|U'|\leq \frac{6}{\alpha}$, $|I_j|\geq \frac{\alpha n}{2}$ and $r\leq \frac{2}\alpha$. 
%Let $J$ be a subset of $V(H)$ such that $|J\cap V(I_j)|=3$ for each $j$, and $U'\subset J$. Clearly $|J|\leq \frac{12}{\alpha}$, and by union bounds, with high probability, each pair of vertices in $J$ are $3$-connected in $H$, conditioned on which $H$ is $3$-connected, since for any vertex pair $u,v$, after deleting any two vertices of $V(H)\backslash \{u,v\}$, 
%\begin{itemize}
%\item if $u,v\in J$ or $u,v\in I_j$ for some $j$, since they are initially $3$-connected, they are still connected after deletion;
%\item if $u\in I_j\backslash J, v\in J$, we can find a remaining vertex $w\in J\cap I_j$, and $(u,w), (w,v)$ are still connected since they are initially $3$-connected, further $u,v$ are still connected;
%\item if $u\in I_j\backslash J, v\in I_k\backslash J$, we can still find remaining vertices $w_1\in J\backslash I_j, w_2\in J\backslash I_k$, and similarly $(u,w_1),(w_1,w_2),(w_2,v)$ are still connected, so $u,v$ are still connected.
%\end{itemize}

\section*{Acknowledgement}
{Wenchong Chen thanks Lior Gishboliner for insightful discussions on relative topics. Additionally, he gratefully acknowledges the organizers of ECOPRO 2025 Student Research Program.
}

\bibliographystyle{abbrv}
\bibliography{main}

\end{document}